\newtheorem{theorem}{Theorem}
\newtheorem{corollary}{Corollary}
\newtheorem{lemma}{Lemma}%[statement]
\newtheorem{proposition}{Proposition}
\newtheorem{claim}{Claim}
\theoremstyle{remark}
\newtheorem{remark}{Remark}
\theoremstyle{definition}
\newcommand{\Z}{\mathbb{Z}}
\newcommand{\del}{\smash{\mskip3mu\lower1truept\hbox{\vdots}\mskip3mu}}
\renewcommand{\phi}{\varphi}
\renewcommand{\kappa}{\varkappa}
\newcommand{\sk}{\mathrm{sk}}
\renewcommand{\int}{\mathrm{int}}
\newcommand{\Deg}{\mathrm{Deg}\,}
\newcommand{\TT}{\mathcal{T}}
\newcommand{\RP}{\mathbb{RP}}
\begin{document}

\title{Maximal degree of a map of surfaces}
\author{Andrey Ryabichev%
\footnote{\ IUM; Moscow, Russia //
\texttt{ryabichev@179.ru}
%//Supported in part by Young Russian Mathematics award
}}
\date{}

\maketitle

\begin{abstract}
Given closed possibly nonorientable surfaces $M,N$, 
we prove that if a map $f:M\to N$ has degree $d>0$, then $\chi(M)\le d\cdot\chi(N)$.
We give all necessary comments on the definition and properties of geometric degree,
which can be defined for any map.
Our proof is based on the Kneser-Edmonds factorization theorem,
simple natural proof of which is also presented.

\end{abstract}

\section{Introduction}

Through this paper, we set $M,N$ to be closed connected surfaces, possibly nonorientable.

Given a map $f:M\to N$,
we define its {\it degree $\Deg f$} as a minimal cardinality of the preimage of a regular value
among all smooth maps in the homotopy class of $f$.
Note that if $M$ and $N$ are orientable, then
%the map $f_*:H_2(M)\to H_2(N)$ is multiplying by $\pm\Deg f$.
this number coincides with the usual notion of degree,
we discuss features of the definition and prove some of its properties in \S\ref{s:degree}.

Our goal is to prove the following fact:

\begin{theorem}\label{th:degree}
Let %$M,N$ be closed surfaces and
$f:M\to N$ be a map of degree $d>0$.
Then $\chi(M)\le d\cdot\chi(N)$.
\end{theorem}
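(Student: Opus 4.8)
The plan is to route the argument through the Kneser--Edmonds factorization theorem and then reduce the claim to two elementary Euler-characteristic bounds. Since $\Deg f=d\ge 1$, that theorem allows me to assume --- after replacing $f$ by a homotopic map, which is harmless because $\Deg$ is a homotopy invariant --- that $f$ factors as
\[
M\xrightarrow{\ \pi\ }M_{0}\xrightarrow{\ g\ }N ,
\]
where $M_{0}$ is a closed surface, $g$ is a $d$-fold branched covering, and $\pi$ is a pinch map: $M_{0}$ is obtained from $M$ by deleting the interiors of finitely many disjoint compact connected subsurfaces $X_{1},\dots,X_{k}\subset M$, each having a single boundary circle, and then collapsing each of the freed circles to a point. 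It therefore suffices to prove the two inequalities $\chi(M)\le\chi(M_{0})$ and $\chi(M_{0})\le d\cdot\chi(N)$.

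For the pinch map I would simply count: deleting the interior of $X_{i}$ decreases the Euler characteristic by $\chi(X_{i})$, and collapsing the resulting boundary circle to a point increases it by $1$, so
\[
\chi(M_{0})=\chi(M)+\sum_{i=1}^{k}\bigl(1-\chi(X_{i})\bigr).
\]
Each $X_{i}$ is connected with nonempty boundary, hence $\chi(X_{i})\le 1$ and every summand is nonnegative, giving $\chi(M)\le\chi(M_{0})$. For the branched covering I would use the Riemann--Hurwitz count: choose a triangulation of $N$ whose vertex set contains the (finite) branch locus $B$, and pull it back through $g$; every cell lying over $N\setminus B$ has exactly $d$ preimage cells, while a branch value $v\in B$ has $d-\sum_{p\in g^{-1}(v)}(e_{p}-1)$ preimages, where $e_{p}\ge 1$ is the local multiplicity of $g$ at $p$. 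Summing over all cells yields
\[
\chi(M_{0})=d\cdot\chi(N)-\sum_{p}(e_{p}-1)\le d\cdot\chi(N) .
\]
Combining the two estimates gives $\chi(M)\le\chi(M_{0})\le d\cdot\chi(N)$, as required.

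Nothing in the last two steps uses orientability --- Euler characteristics, triangulations and the local branch model $z\mapsto z^{e}$ all make sense for nonorientable surfaces --- so the whole weight of the proof sits in the Kneser--Edmonds factorization theorem. I expect the main obstacle to be precisely that input: deforming a positive-degree map $f$ so that its failure to be a covering is concentrated, on the one hand, on a subsurface that $f$ then collapses to points and, on the other hand, at finitely many honest branch points. Once that factorization is in hand, the Euler-characteristic bookkeeping above finishes the argument, and the same scheme also recovers the classical Kneser inequality for orientable surfaces as the special case $\chi(N)<0$.
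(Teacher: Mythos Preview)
Your argument follows the same route as the paper: factorize, then bound $\chi(M)\le\chi(M_0)$ via the pinch and $\chi(M_0)\le d\cdot\chi(N)$ via Riemann--Hurwitz. The one step you pass over too quickly is the assertion that the branched covering $g$ has \emph{exactly} $d=\Deg f$ sheets. This is not automatic from the existence of a factorization alone: Remark~\ref{r:pinch-branchedcovering} exhibits a map $\RP^2\to S^2$ of geometric degree~$1$ that factors as a pinch followed by a $7$-fold branched covering $S^2\to S^2$, so in general the sheet number of $g$ can exceed $\Deg(g\circ\pi)$. Edmonds' original formulation does control the degree, and if that is what you are invoking then your argument is complete; the paper's own statement of the factorization theorem (Theorem~\ref{th:factirisation}) does not, and the paper supplies the missing equality separately via Propositions~\ref{pr:nonorientable-branch} and~\ref{pr:deg-q-deg-f}, which use a case analysis on orientability together with the specific structure of the constructed factorization. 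Either way, you should flag that the equality between $\Deg f$ and the number of sheets of $g$ carries genuine content in the nonorientable setting.
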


%Basically, 
This fact is well known,
apparently it was first proved by Kneser \cite{kneser-30} in the case of orientable surfaces.
It looks very similar to the assertions ``{\it all maps $S^2\to N$ are nullhomotopic}''
and ``{\it if $\chi(M)>\chi(N)$, then $d=0$}''.
These statements can be easily proved
using universal covering or the intersection form in cohomology.
However, similar elementary approaches to Theorem~\ref{th:degree} are not known to the author.

The possible ways to prove Theorem~\ref{th:degree} are rather a bit more technical.
For orientable surfaces, one can use
the Milnor-Wood inequality \cite[Th.~1.1]{wood}
or the Gromov norm \cite[\S5.35]{gromov}.
In this paper, we present the most elementary proof
including the following factorization theorem of Edmonds.
First, let us recall the notation.

\vspace{.5em}

Suppose we have a $2$-submanifold with boundary $K\subset M$,
such that for every component $K_i\subset K$ its boundary $\partial K_i$ is connected.
Collapsing each $K_i$ we obtain a smooth manifold $Q$.
The factorization map $p:M\to Q$ is called {\it a pinch map}.

A map of closed surfaces $q:Q\to N$ is called {\it a branched covering},
if for some discrete subset $B\subset N$ the restriction of $q$ to $q^{-1}(B)$ is a covering.

\begin{theorem}\label{th:factirisation}
Every map %of closed surfaces
$f:M\to N$ is
either homotopic to the composition of a pinch map $p:M\to Q$ with a branched covering $q:Q\to N$ for some closed surface $Q$,
or homotopic to a map whose image is a graph imbedded into $N$.
\end{theorem}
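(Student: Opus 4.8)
The plan is to put $f$ into a normal form by finitely many homotopies and ``surgeries,'' after which either the preimage of a $2$-cell of $N$ is empty --- which gives the graph alternative --- or $f$ visibly factors as a pinch map followed by a branched covering.

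First I would fix a handle decomposition of $N$ with a single $2$-handle: write $N=H\cup_c D$, where $D\cong D^2$ is an embedded disk, $c=\partial D$, and $H=\overline{N\setminus D}$ is a compact surface whose only boundary circle is $c$ and which deformation retracts onto an embedded graph $G\subset H$ (a wedge of circles), its spine. Making $f$ smooth and transverse to $c$, the set $L:=f^{-1}(c)$ is a closed $1$-submanifold of $M$ and $M=A\cup_L P$ with $A:=f^{-1}(H)$, $P:=f^{-1}(D)$; composing with the deformation retraction $H\to G$ and keeping $f$ fixed on $P$, I may further assume $f(A)\subseteq G$, so $L=\partial A=\partial P$, every component $R$ of $P$ is carried into $D$ with $f(\partial R)\subseteq c$, and every component of $A$ into the $1$-complex $G$.

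Next I would simplify $f$ by a sequence of homotopies, each decreasing a complexity measure assembled from the number of components of $L$, the genera and numbers of boundary circles of the components of $P$, and the number of fold arcs of $f|_P$ --- one natural choice being
\[
\bigl(\ \#\pi_0 L\ ,\ \ \textstyle\sum_{R}(\mathrm{genus}\,R+\#\pi_0\partial R)\ ,\ \ \#\{\text{fold arcs of }f|_P\}\ \bigr)
\]
ordered lexicographically, although the moves have to be arranged with care to make it drop. The moves are: (i) if a component $\ell\subseteq L$ maps to $c$ with degree $0$, i.e.\ $f|_\ell$ is null-homotopic in $c$, then $f|_\ell$ extends over an annulus into $D$ (or into $H$), and inserting this ``unwinding'' along a bicollar of $\ell$ removes $\ell$ from $f^{-1}(c)$; (ii) if a component $R$ of $P$ is not a disk, I cut off a handle of $R$ along a non-separating simple closed curve, or a subsurface of $R$ along an essential properly embedded arc, and push it across $c$, mapping the cutting curve to $c$ with degree $0$ and then unwinding as in (i); this trades topology of $R$ for a subsurface $K\subseteq M$ with connected boundary carried by $f$ into $D$ or into $G$, which I record to be pinched; (iii) if a disk component $R$ still carries fold arcs of $f|_R\colon R\to D$, a Whitney-type homotopy cancels an innermost pair of folds or pushes a fold across $c$. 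When no move applies, each component $R$ of $P$ is a disk on which $f|_R$ has no fold arcs and $f|_{\partial R}$ has some degree $n(R)\ge 1$ onto $c$, so up to homotopy rel $\partial R$ the map $f|_R$ is the model branched covering $z\mapsto z^{n(R)}$, while $f|_A\colon A\to H$ restricts on the adjacent boundary circle to the degree-$n(R)$ power map of $c$. \emph{Showing that such a reduction terminates and leaves $f$ in this normal form --- in particular, that the leftover map $A\to H$, after collapsing the recorded pieces, is an honest covering of $H$ --- is the main obstacle}: this is a relative and easier instance of the theorem, for maps of surfaces-with-boundary into the graph-thickening $H$, where one-dimensionality of the target keeps the combinatorics manageable.

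Finally, let $K_1,\dots,K_r\subseteq M$ be the subsurfaces recorded in the reduction (each with connected boundary, each carried by $f$ into $D$ or into $G$ hence inessentially), together with any components of $A$ sent to a point; let $p\colon M\to Q$ collapse each $K_i$. Then $f\simeq q\circ p$ for an induced $q\colon Q\to N$, and $q$ is a branched covering: over $D\setminus\{0\}$ it is a disjoint union of $n(R)$-fold coverings of the punctured disk, over a regular neighborhood of $G$ it is the covering obtained from $A$ after the collapses, and the two agree on the annular overlap because the boundary degrees match, so $q$ is a covering over $N\setminus\{0\}$. If instead $P=\es$ at the end, then $f(M)\subseteq G$; putting the inclusion $G\hookrightarrow N$ into general position exhibits $G$ as a graph embedded in $N$ (a single point if $f$ is null-homotopic), and $f$ is homotopic to a map with image this embedded graph.
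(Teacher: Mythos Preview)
Your outline has a genuine gap at exactly the point you yourself flag as the ``main obstacle'': the treatment of the $H$-side. You normalize each disk component $R\subset P$ to the model $z\mapsto z^{n(R)}$, but nothing in moves (i)--(iii) acts on $A$ or on the homotopy class of $f|_A$; you only arrange $f(A)\subseteq G$ by composing with the retraction $H\to G$. A map of a bounded surface into a graph can be essentially arbitrary subject to the prescribed boundary degrees, and ``un-retracting'' it to a branched covering of the thickening $H$ is not automatic --- it is the relative form of the very theorem you are proving, with no evident drop in difficulty. Your remark that ``one-dimensionality of the target keeps the combinatorics manageable'' conflates $G$ with $H$: the target of the branched covering $q$ must be $H$, which is two-dimensional. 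The sentence ``over a regular neighborhood of $G$ it is the covering obtained from $A$ after the collapses'' is an assertion, not a consequence of what precedes it. (A secondary issue: you never address orientations, so there is no reason the various $n(R)$ are positive or even mutually compatible on nonorientable $M$; this is where the paper's Claim~\ref{c:boundary-orientation} does real work.)

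The paper sidesteps the asymmetry by taking a full triangulation of $N$ rather than a single disk. Every face of the $1$-skeleton $\TT$ is then a triangle, so the analysis is uniform: there is no leftover ``graph side.'' Instead of an explicit descent along a complexity function, the paper simply chooses $h\sim f$ transversal to $\TT$ with the number of edges of $\Gamma=h^{-1}(\TT)$ minimal, and deduces three structural claims about $\Gamma$ (no edge with coincident endpoint images; no mixture of isolated circles with essential curves; coherently oriented boundaries of each complementary region). These claims, proved by one-move contradictions to minimality, immediately give the pinch/branched-covering normal form on every triangle at once. If you want to salvage your single-disk scheme, you would need an independent argument that a map of a compact surface into a handlebody $H$, covering $\partial H$ on the boundary, factors as pinch followed by covering --- and that is not noticeably easier than Theorem~\ref{th:factirisation} itself.
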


This theorem is due to Edmonds \cite{edmonds}.
He also holds the case of surfaces with boundary,
considering maps $f$ which restriction to the boundary is $(\Deg f)$-sheeted covering.
His proof was corrected and improved by Skora \cite{skora}.

We present another proof of Theorem~\ref{th:factirisation} which is simpler and more natural in some ways.
Namely, it does not use induction and construct the factorization in one step.
The idea is to take a triangulation of $N$ and consider a map $h$ homotopic to $f$ which is transversal to $\sk^1(N)$
and has minimal number of edges of $h^{-1}(\sk^1(N))$,
and then to deform $h$ over each triangle.
This approach was inspired by the Lurie's proof of Dehn-Nielsen theorem \cite[Lect.~38]{lurie},
see also \cite[\S8.3.1]{farb-margalit}.

Unlike \cite{edmonds} and \cite{skora},
in our proof we do not control a degree of a branched covering.
Also our approach does not deal with surfaces with boundary,
the corresponding generalization is possible but it would require some additional work.
Finally, we do not use the theory of absolute degree,
see \S\ref{s:degree} for some remarks on this subject,
so we tried to make our reasoning completely self-contained.

\subsection*{Acknowledgments}
I wish to express my gratitude to Ian Agol,
who drew my attention to the Edmond's paper~\cite{edmonds} answering my Mathoverflow question,
and also to Allan Edmonds, who paid attension to this discussion and gave a link to the Skora's article \cite{skora}.
I am very thankful to participants of the Moscow seminar ``Geometric walks'',
in particular to Alex Gorin, for a fruitful discussion of the approaches to Theorem~\ref{th:degree}.
I want to thank Sashachka Pilipyuk for some critical remarks.
Finally, I am grateful to the Independent University of Moscow and the Young Russian Mathematics award for a financial support.

\section{Preliminaries}

\subsection{Conventions and notation on surfaces and transversality}

We use the term {\it surface} for a $2$-manifold, and {\it closed} with respect to a manifold means that it is compact without boundary.
All manifolds will be assumed to be infinitely-smooth, as well as maps.
The maps we construct sometimes will be not smooth and should be smoothed if needed,
this inaccuracy will not cause difficulties, see e.\,g.\ \cite[\S8]{hirsch-book}.

Every open subset of a surface $U\subset M$ that we take
is supposed to be ``sufficiently nice'' --- namely, it should be an interior of a compact $2$-submanifold with boundary.
We refer to this boundary as $\partial U$ (although formally it not belongs to $U$).

When we {\it cut} a closed surface $M$ {\it along a closed curve $C\subset M$},
we assume to obtain a compact surface with boundary $M'$ as a result,
so the points of $C$ will double in $M'$.

%\subsection{Transversality}

Given surfaces $M,N$, we say that a map $f:M\to N$ is {\it transversal}
to a stratified subset $\mathcal{S}\subset N$,
if $f$ is transversal to every its stratum.
Namely, every vertex $y\in\mathcal{S}$ must be a regular value of $f$
and for every edge $C\subset\mathcal{S}$ and any $x\in f^{-1}(C)$
there must be a vector $v\in T_x M$ such that $df(v)\notin T_{f(x)}C$.

By the implicit function theorem, $f^{-1}(\mathcal{S})\subset M$ is a stratified subset.
If $\mathcal{S}$ is closed, then the set of maps transversal to $\mathcal{S}$
is a dense and open in $C^\infty(M,N)$.
See e.\,g.\ \cite[Part~1, \S1]{goresky-macpherson} for details.

\subsection{Geometric degree}\label{s:degree}

Recall, we define a {\it degree} of a map $f:M\to N$
as a minimal $d\in\Z_{\le0}$ such that there are
a~smooth map $h:M\to N$ homotopic to $f$ and
its regular value $y\in N$ such that $|h^{-1}(y)|=d$.
It is known as {\it geometric degree}, but in further sections we will call it just a degree
and denote as $\Deg f$.

The degree theory began with the work of Hopf \cite{hopf-28}, \cite{hopf-30}
and was developed with Olum \cite{olum} and Epstein \cite{epstein}.
The most important properties of geometric degree in dimension $2$ was proved by Kneser \cite{kneser-28}, \cite{kneser-30}.

Here we will state and sketch the proofs of a few properties of the degree.
We consider not the famous properties but only which will be used in \S\ref{s:degree-th-proof} 
in order to prove Theorem~\ref{th:degree}.
For a more detailed review see e.\,g.\ \cite{brown-schirmer} or \cite{sklyarenko} in addition to \cite{epstein} and \cite{olum}.
As usual, we suppose $M,N$ to be closed connected surfaces,
but one can similarly formulate and prove corresponding statements
for any closed manifolds of the same dimension.

\subsubsection{Degree of a branched covering}

Suppose a map $f:M\to N$ is {\it orientation-true}.
This means that it maps orientation-preserving/reversing loops in $M$
to orientation-preserving/reversing loops in $N$ respectively.
Clearly, this is equivalent to the equality $f^*(w_1(N))=w_1(M)$ for Stiefel-Whitney classes,
or to the fact $f^*(\Z_N)\simeq\Z_M$, where $\Z_M$ denotes the orientation local system of $M$ with fiber $\Z$.
Then $f$ induces a homomorphism $H^2(N;\Z_N)\to H^2(M;\Z_M)$, where both groups here are isomorphic to $\Z$.
Its coefficient is called the {\it cohomological degree} of $f$, denote it by $\deg f$.
Note that for non-orientation-true maps one can similarly define a cohomological degree
only as a residue $\pmod2$
because of $H^2(M;f^*(\Z_N))\simeq\Z_2$.
For more details on local systems see e.\,g.\ \cite{spanier} or \cite[Ch.\,VI]{whitehead}.

\begin{proposition}\label{pr:orientation-true}
If the map $f:M\to N$ is orientation-true, then $\deg f=\Deg f$.
\end{proposition}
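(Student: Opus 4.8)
The plan is to compare the two numbers by picking a convenient representative and a convenient regular value, and matching up the local contributions. First I would recall that since $f$ is orientation-true, the pullback local system $f^*(\Z_N)$ is isomorphic to $\Z_M$, so $f$ induces $f^*\colon H^2(N;\Z_N)\to H^2(M;\Z_M)$ with both groups identified with $\Z$ via the fundamental classes; the integer $\deg f$ is characterized by $f^*[N]^\vee = (\deg f)\,[M]^\vee$, or dually by the statement that for a generic point $y$ the signed count of preimages (signs taken with respect to local orientations compatibly chosen via the orientation-true condition) equals $\deg f$. I would first reduce to the case $\deg f\ge 0$ by replacing $N$'s orientation data if necessary, or simply note the argument is symmetric; the claim $\Deg f = |\deg f|$ is what is really meant when $\deg f$ could be negative, but in the orientable-coefficient bookkeeping the sign is fixed once we fix generators, so I will treat $d:=\deg f$ and show $\Deg f = d$ assuming $d>0$ (the case $d=0$ and $d<0$ being analogous or trivial).

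The inequality $\Deg f \ge \deg f$ is the easy direction: for any smooth $h$ homotopic to $f$ and any regular value $y$, the cohomological degree is computed as a signed sum over $h^{-1}(y)$ of local degrees $\pm 1$, hence $|\deg f| = |\deg h| \le |h^{-1}(y)|$; minimizing over $h$ and $y$ gives $|\deg f|\le \Deg f$. Here I would be careful to justify that the local system pairing really does compute $\deg h$ as such a signed count — this is the standard description of degree with twisted coefficients, obtained by choosing a small coordinate ball around $y$, trivializing $\Z_N$ there, and pulling back; each sheet over the ball contributes $\pm1$ according to whether $h$ preserves or reverses the chosen local orientations, and orientation-trueness guarantees these local choices are globally consistent so that the sum is independent of the trivialization.

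The substantive direction is $\Deg f \le \deg f$: I must produce a representative $h$ and a regular value $y$ with exactly $d$ preimages, i.e.\ with \emph{no cancelling pairs}. The idea is a ``Whitney-trick''-style cancellation: start with any transversal $h$ and a regular value $y$ with $h^{-1}(y)=\{x_1,\dots,x_k\}$; if $k>d$ there exist two points $x_i, x_j$ of opposite sign. I would join them by an embedded arc $\gamma$ in $M$ avoiding the other preimages, look at $h(\gamma)$, which is a loop at $y$ (after a small adjustment), and use that $h(\gamma)$ is null-homotopic rel the fact that the two local degrees cancel — more precisely, one performs a homotopy of $h$ supported near $\gamma$ (a ``finger move''/elementary cancellation) that removes $x_i$ and $x_j$ from the preimage without creating new ones. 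Iterating reduces $k$ to $d$. The main obstacle is exactly this cancellation step: in dimension $2$ one cannot in general assume $h(\gamma)$ bounds a disc, so the naive Whitney move fails, and one instead has to allow $h$ to change freely in its homotopy class — the correct statement is that two oppositely-signed preimage points on a connected surface can always be cancelled by a homotopy, which is a folklore lemma (it underlies the very definition of geometric degree and appears in Epstein \cite{epstein} and Olum \cite{olum}). I would either cite this elementary cancellation lemma or sketch it: contract the arc $\gamma$ through a homotopy of $h$ so that a neighborhood of $\gamma$ maps into a small disc around $y$ by a map of local degree $0$, and then push that disc's worth of preimage off $y$ entirely. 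Combining both inequalities yields $\Deg f = \deg f$.
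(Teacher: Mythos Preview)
Your easy direction ($|\deg f|\le\Deg f$) is fine and matches the paper. The hard direction has a genuine gap. Your cancellation move---``contract the arc $\gamma$ through a homotopy of $h$ so that a neighborhood of $\gamma$ maps into a small disc around $y$''---only works when $h(\gamma)$ is null-homotopic in $N$, and for an arbitrary arc between two opposite-sign preimages there is no reason this should hold. In fact the ``folklore lemma'' as you state it (that \emph{any} two oppositely-signed preimages can be cancelled by a homotopy supported near an arc joining them) is false: only preimages in the same Nielsen root class, i.e.\ those joined by an arc whose image is null-homotopic, can be cancelled by such a local move. What is true is that if opposite signs occur at all then \emph{some} cancellable pair exists, but this is exactly the nontrivial content, and your sketch does not supply it.

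The paper handles this by a different and more concrete device. It chooses a CW-decomposition of $N$ with the single $0$-cell $y$, makes $h$ transversal to $\sk^1(N)$, and studies the graph $\Gamma=h^{-1}(\sk^1(N))$. Since $M$ is connected, all preimages of $y$ lie in one component of $\Gamma$; the observation is that if these preimages do not all carry the same sign, then some edge $e\subset\Gamma$ must fail to cover its target $1$-cell entirely. For such an edge the image $h(e)$ is a back-and-forth arc inside a single $1$-cell, hence contractible, so the two endpoints of $e$ are automatically in the same Nielsen class and can be cancelled by pushing $h$ off the $1$-cell near $e$ (exactly the move of Claim~\ref{c:no-loops}). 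This graph-theoretic step is precisely what locates a cancellable pair and is what your argument is missing; without it, or without importing the relevant Nielsen root theory from \cite{epstein}, \cite{olum} or \cite{brown-schirmer} rather than just citing a lemma, the proof is incomplete.
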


%In fact, the following more powerful assertion holds, but its proof is a

\begin{proof}
Deform $f$ so that a regular value $y\in N$ has $\Deg f$ preimages.
Choose a local orientation of $M$ and $N$.
That allows us to define {\it the sign} for every preimage $x_i\in f^{-1}(y)$,
so that the sum equals $\deg f$
(formally, here we use local cohomology).
%this is an analogue of a standard argument from the orientable case
This implies that $\deg f\le\Deg f$.

Assume that the inequality is strict,
so some preimages of $y$ have different signs.
Then we can try to cancel them by a homotopy of $f$
and obtain a contradiction with $|f^{-1}(y)|=\Deg f$.
For this purpose we take a $CW$-decomposition of $N$ with $\sk^0(N)=y$
and suppose that $f$ is transversal to $\sk^1(N)$.
The preimage $f^{-1}(\sk^1(N))$  is a graph.
Since $M$ is connected, all preimages of $y$
contain in one component of the graph, call it $\Gamma$.
By the assumption, there is an edge $e\subset\Gamma$
which image does not cover the whole of corresponding $1$-cell of $N$
(otherwise all preimages of $y$ have the same sign). %or $M$ is disconnected
Then the ends of $e$ can be cancelled.
This argument is very similar to the proof of Claim~\ref{c:no-loops} in \S\ref{s:properties} below.
\end{proof}

%It is easy to see that if $f$ is non-orientation-true, then $\Deg f\equiv\deg f\pmod2$.

\begin{corollary}\label{col:covering-degree}
For a $k$-sheeted branched covering $f:M\to N$ we have $\Deg f=k$.
\end{corollary}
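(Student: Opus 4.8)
The plan is to deduce this from Proposition~\ref{pr:orientation-true}, by checking that a $k$-sheeted branched covering $f:M\to N$ is orientation-true and that its cohomological degree equals $k$, so that $\Deg f=\deg f=k$. To set things up: the branch set $B\subset N$ is discrete, hence finite since $N$ is compact, so $N\setminus B$ is again a connected surface and $f$ restricts to an honest $k$-sheeted covering $f:M\setminus f^{-1}(B)\to N\setminus B$. In particular, over $N\setminus B$ the map $f$ is a local diffeomorphism, every $y\in N\setminus B$ is a regular value of $f$, and $|f^{-1}(y)|=k$; this already gives the bound $\Deg f\le k$ directly, with $f$ itself as a witness.

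Next I would verify that $f$ is orientation-true. A local diffeomorphism pulls back the tangent bundle, so $w_1\bigl(M\setminus f^{-1}(B)\bigr)=f^*\,w_1(N\setminus B)$. Since every loop can be pushed off a finite set of points, the restriction maps $H^1(M;\Z_2)\to H^1\bigl(M\setminus f^{-1}(B);\Z_2\bigr)$ and $H^1(N;\Z_2)\to H^1(N\setminus B;\Z_2)$ are injective, so the previous identity upgrades to $w_1(M)=f^*\,w_1(N)$. Hence $f$ is orientation-true, and Proposition~\ref{pr:orientation-true} gives $\Deg f=\deg f$.

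It remains to compute $\deg f$, i.e.\ to evaluate $f^*:H^2(N;\Z_N)\to H^2(M;\Z_M)$ on the canonical twisted fundamental class. Choosing a small disk $D$ about a regular value $y\in N\setminus B$, its preimage is a disjoint union of $k$ disks $D_1,\dots,D_k$, each mapped diffeomorphically onto $D$ by $f$, and as in the proof of Proposition~\ref{pr:orientation-true} one reads off $\deg f$ from the contributions of these disks. Each contribution is $+1$: the isomorphism $H^2(D,\partial D;\Z_N)\to H^2(D_i,\partial D_i;\Z_M)$ induced by the diffeomorphism $f|_{D_i}$ sends canonical generator to canonical generator, because each of these groups is canonically $\Z$ — essentially the tensor square of a rank-one free abelian group — and the tensor square of any isomorphism of such groups respects the canonical generators. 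Summing over $i$ gives $\deg f=k$, whence $\Deg f=k$.

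The step I expect to require the most care is the last one: making sure that all $k$ local contributions carry the same sign $+1$ rather than partially cancelling. This is exactly the point where one must argue with the twisted fundamental classes of $M$ and $N$, and not with a priori chosen orientations (which for nonorientable surfaces need not exist); it is a standard feature of the geometric degree — it is, for instance, why the double cover $S^2\to\RP^2$ has degree $2$ rather than $0$ — but it deserves to be spelled out so that the reader sees why each preimage of a regular value off the branch locus genuinely counts positively.
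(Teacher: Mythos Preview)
Your argument is correct and follows the same route as the paper: verify that a branched covering is orientation-true, compute $\deg f=k$, and invoke Proposition~\ref{pr:orientation-true}. The paper's proof is much terser---it simply asserts ``clearly, $f$ is orientation-true'' and ``the fact that $\deg f=k$''---whereas you actually justify both claims; your $w_1$ argument via injectivity of $H^1(M;\Z_2)\to H^1(M\setminus f^{-1}(B);\Z_2)$ and your tensor-square observation for the local twisted classes are the right ways to fill these in. One small remark: the paper observes that only the easy inequality $\deg f\le\Deg f$ from Proposition~\ref{pr:orientation-true} is needed, since you already have $\Deg f\le k$ directly from the regular fibers; you noted this bound too, so you could conclude via $k=\deg f\le\Deg f\le k$ without appealing to the harder direction of the proposition.
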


\begin{proof}
%Note that the branch set of $f$ can be empty.
Clearly, $f$ is orientation-true.
So the required directly follows by Proposition~\ref{pr:orientation-true} and the fact that $\deg f=k$.
Note that here we only use the inequality $\deg f\le\Deg f$ 
from the simple part of the proof of Proposition~\ref{pr:orientation-true}.
\end{proof}

\subsubsection{Degree of the composition of maps}

\begin{proposition}\label{pr:composition-covering}
Suppose $f:M\to N$ is a map.
Let $q:N'\to N$ be a $k$-sheeted covering such that $f$ lifts to $N'$,
e.\,g. there is $f':M\to N'$ such that $f=q\circ f'$.
Also suppose $N'$ is connected. 
Then $\Deg f=k\cdot\Deg f'$.
\end{proposition}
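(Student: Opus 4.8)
The plan is to prove the two inequalities $\Deg f\le k\cdot\Deg f'$ and $k\cdot\Deg f'\le\Deg f$ separately, working directly from the geometric definition of degree in terms of preimages of regular values.

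For the upper bound, I would start with a smooth map $h'$ homotopic to $f'$ realizing $|h'^{-1}(y')|=\Deg f'$ for some regular value $y'\in N'$ of $h'$. Composing, $h:=q\circ h'$ is homotopic to $f$, and since $q$ is a covering (in particular a local diffeomorphism), the point $y:=q(y')\in N$ is a regular value of $h$, with $h^{-1}(y)=\bigsqcup_{y''\in q^{-1}(y)}h'^{-1}(y'')$. There are $k$ points in the fiber $q^{-1}(y)$. A priori the other fibers $h'^{-1}(y'')$ need not have the same cardinality as $h'^{-1}(y')$, so this only gives $\Deg f\le\sum_{y''\in q^{-1}(y)}|h'^{-1}(y'')|$, which is not yet what we want. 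To fix this I would first perturb: choose $y'$ to be a regular value of $h'$ that also minimizes the fiber cardinality among all points of $q^{-1}(q(y'))$ — equivalently, observe that a generic regular value of $h$ pulls back under $q$ to $k$ regular values of $h'$, and by definition of $\Deg f'$ each such fiber has cardinality $\ge\Deg f'$; but we need one with all $k$ fibers of size exactly $\Deg f'$. The cleanest route is the reverse: since the fiber-cardinality function $y\mapsto|h^{-1}(y)|$ is locally constant on the set of regular values of $h$, its minimum equals $\Deg f$ on a suitable $h$, and for such $h=q\circ h'$ the fiber over a regular value $y$ splits into $k$ fibers of $h'$, each of size $\ge\Deg f'$, giving $\Deg f\ge k\cdot\Deg f'$ — so this argument actually yields the \emph{lower} bound, provided one knows the minimizing $h$ can be taken of the form $q\circ h'$. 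Because $q$ is a covering, every homotopy of $f$ lifts to a homotopy of $f'$, so the homotopy class of $f$ is exactly $\{q\circ h':h'\simeq f'\}$; thus the infimum defining $\Deg f$ is over maps $q\circ h'$, and the splitting argument gives $\Deg f\ge k\cdot\Deg f'$.

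For the opposite inequality $\Deg f\le k\cdot\Deg f'$, I would take $h'\simeq f'$ realizing $\Deg f'$ at a regular value $y'$, and then homotope $h'$ so that its value-cardinality function is \emph{constant} equal to $\Deg f'$ over a whole fundamental-domain-sized neighborhood — more robustly, use the deck group action: $q^{-1}(U)$ for a small evenly-covered disc $U\ni q(y')$ consists of $k$ disjoint discs $U_1,\dots,U_k$, and I can homotope $h'$ near $(h')^{-1}(U_2\sqcup\cdots\sqcup U_k)$ to push those preimages into $U_1$ (or simply choose the regular value in $N$ so that all its $q$-preimages land in $U_1$, using that $\Deg f'$ is the minimum). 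Then $h=q\circ h'$ has a regular value with at most $k\cdot\Deg f'$ preimages. Combining the two inequalities gives the equality.

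The main obstacle I anticipate is the bookkeeping around \emph{which} regular value to pick and the non-constancy of fiber cardinalities: the degree is defined as a minimum over the homotopy class, not as the cardinality of an arbitrary regular fiber, so one must be careful to (i) use the homotopy lifting property to identify the homotopy class of $f$ with compositions $q\circ h'$, and (ii) exploit that over regular values the fiber count is locally constant, hence the minimizing map has \emph{all} regular fibers of the same size $\Deg f$. Once these two points are set up cleanly, the covering $q$ being a local diffeomorphism makes the transfer of regular values and the $k$-to-one splitting of fibers essentially automatic. I would also remark that connectedness of $N'$ is used precisely to ensure the covering $q$ has a well-defined global sheet number $k$ and that $q^{-1}$ of a point has exactly $k$ elements.
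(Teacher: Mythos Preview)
Your lower-bound argument $\Deg f\ge k\cdot\Deg f'$ is correct and is exactly the paper's: homotope $f$ to realize $\Deg f$ at a regular value $y$, lift the homotopy via the covering, and split $h^{-1}(y)$ into $k$ fibers of $h'$, each of size at least $\Deg f'$.

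The upper bound, however, has a genuine gap. What you need is an $h'\simeq f'$ for which \emph{every} point of the finite set $q^{-1}(y)\subset N'$ is a regular value with exactly $\Deg f'$ preimages. Neither of your proposed mechanisms achieves this. Pushing points of $(h')^{-1}(U_2\sqcup\cdots\sqcup U_k)$ ``into $U_1$'' is not meaningful: the $U_i$ are disjoint open sets in the \emph{target} $N'$, and moving $h'$-images from one sheet to another is a global change, not a local homotopy near those preimages. Likewise, a point $y\in N$ can never have ``all its $q$-preimages land in $U_1$'', since $q^{-1}(y)$ meets every sheet exactly once. More seriously, the assertion in your last paragraph that ``the minimizing map has \emph{all} regular fibers of the same size $\Deg f$'' is false: fiber cardinality is locally constant on the regular-value set, but that set is typically disconnected, so a map realizing $\Deg f'$ at one regular value may well have larger regular fibers elsewhere.

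What the paper uses (and calls ``evident'') is the stronger fact that $f'$ can be homotoped so that \emph{any prescribed finite subset} of $N'$ consists of regular values each with $\Deg f'$ preimages. One clean justification: realize $\Deg f'$ at a single regular value $y'$; then all points in a small disc $D\ni y'$ are regular with $\Deg f'$ preimages; now compose $h'$ with a diffeomorphism of $N'$ isotopic to the identity carrying $D$ onto a region containing all of $q^{-1}(y)$ (possible since $N'$ is connected and $2$-dimensional). With this lemma in hand, your upper bound goes through.
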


\begin{proof}
Homotope $f$ so that the regular value $y\in N$ has $\Deg f$ preimages.
This homotopy can be lifted to a homotopy of $f'$. 
So we obtain that every $y'\in q^{-1}(y)$ is a regular value of $f'$, 
and this immediately implies that $\Deg f\ge k\cdot\Deg f'$.

The opposite inequality is evident, since we can homotope $f'$ so that
{\it any} discrete subset of $N'$ consists of regular values with $\Deg f'$ preimages.
\end{proof}

In fact, the last argument shows that $\Deg (f_2\circ f_1)\le\Deg f_1\cdot\Deg f_2$
for any maps $f_1,f_2$.
The opposite inequality in Proposition~\ref{pr:composition-covering}
is nontrivial 
(and does not hold if $q$ is a branched covering)
because of the following pathology.

\begin{remark}\label{r:branchedcovering-pinch}
Let $f_1:S^2\to\RP^2$ be the universal covering, then $\Deg f_1=2$ by Corollary~\ref{col:covering-degree}.
Let $f_2:\RP^2\to S^2$ be a map collapsing one projective line $l\subset\RP^2$ to a point, 
then $\Deg f_2=1$ since it induces a nonzero homomorphism in $H^2(-;\Z_2)$ and therefore it is non-nullhomotopic.
However, the composition $f_2\circ f_2$ is nullhomotopic: 
is maps both hemispheres of the domain $S^2$ surjectively to the range~$S^2$, but with different orientations.
So $\Deg(f_2\circ f_1)=0$.
\end{remark}

\begin{remark}\label{r:pinch-branchedcovering}
Let $f_2:\RP^2\to S^2$ be as above, we have $\Deg f_2=1$.
Let $f_3:S^2\to S^2$ be any map of degree $7$ as an element of $\pi_2(S^2)$,
then $\Deg f_3=7$ according to Corollary~\ref{col:covering-degree}.
However, $\Deg(f_3\circ f_2)=1$, and moreover, $f_3\circ f_2\sim f_2$.
As one can see, there are exactly two homotopy classes of maps $\RP^2\to S^2$,
since the obstruction for such maps to be homotopic lies in $H^2(\RP^2;\pi_2(S^2))\simeq\Z_2$ 
(see e.\,g.~\cite[Ch.\,VI,~\S6]{whitehead}).
One also can directly construct a homotopy of $f_3\circ f_2$ to $f_2$.
So we have another example when $\Deg (f_3\circ f_2)\ne\Deg f_3\cdot\Deg f_2$.
Note that here $f_2$ is homotopic to a pinch map,
and as $f_3$ we can take a branched covering.
\end{remark}

\section{The factorization theorem}

\subsection{A map with a minimal graph}

\begin{proof}[Proof of Theorem~\ref{th:factirisation}]
Take some triangulation of $N$ and denote its $1$-skeleton by $\TT\subset N$.
Consider maps $h:M\to N$ homotopic to $f$ which are transversal to $\TT$.
Then $h^{-1}(\TT)$ is an imbedded graph in $M$, call it $\Gamma$, possibly with {\it isolated circles}
whose images do not cover the vertices of~$\TT$.

We take $h$ such that $\Gamma$ has a minimal number of edges $E(\Gamma)$.
Each isolated circle is counted as one edge.
Then we observe that $\Gamma$ have the following three properties which we will prove in \S\ref{s:properties}.

\begin{claim}\label{c:no-loops}
For every edge of $\Gamma$ the images of its endpoints do not coincide.
\end{claim}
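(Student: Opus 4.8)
The plan is to argue by contradiction: suppose some edge $e\subset\Gamma$ has both endpoints mapped by $h$ to the same vertex $v\in\TT$. Since $\TT$ is the $1$-skeleton of a triangulation, the edge $e$ maps to a loop in $\TT$ based at $v$; I will use this loop together with a small collar of $e$ in $M$ to produce a homotopy of $h$ that strictly decreases $E(\Gamma)$, contradicting minimality. The key geometric input is that $h$ is transversal to $\TT$, so along $e$ the preimage graph looks locally like the edge $e$ itself, and near the endpoints of $e$ the map $h$ is a local diffeomorphism onto a neighborhood of $v$ in $N$ intersected with the relevant $1$-cells.

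First I would set up the local picture: let $a,b$ be the endpoints of $e$, both sent to $v$. Consider a small disk neighborhood $D\subset M$ of $e$ (a collar of the arc $e$), chosen so that $D\cap\Gamma = e$ together with possibly some short arcs of other edges abutting $a$ and $b$. On $D$ the map $h$ sends $e$ into $\TT$ and the two sides of $e$ into the two (not necessarily distinct) triangles of $N$ adjacent to the $1$-cell through which $h(e)$ passes. The idea is to ``push $e$ across'': since $a$ and $b$ are both over $v$, I can homotope $h$ supported in a neighborhood of $e$ so as to slide $e$ off itself — concretely, find a disk $\Delta\subset N$ bounded by the loop $h(e)\cup(\text{a path in }\TT)$... but $\TT$ is a graph and need not bound; instead the cleaner route is to look at the endpoints.

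Here is the argument I would actually carry out. Near $a$, the graph $\Gamma$ meets a small circle $S_a$ around $a$ in a collection of points, one of which is the endpoint of $e$; $h$ maps $S_a$ to a small circle around $v$ in $N$, transversally crossing the $1$-cells of $\TT$ emanating from $v$. The edge $e$ contributes one such crossing. If I can homotope $h$ near $a$ so that the arc of $e$ no longer reaches all the way to $a$ — i.e.\ so that the image $h(e)$ is pulled back off the vertex $v$ into the interior of the incident $1$-cell — then $e$ ceases to be an edge of $h^{-1}(\TT)$ terminating at a vertex of $\Gamma$; doing this at both ends $a$ and $b$ merges $e$ with its neighbors or removes it entirely, lowering $E(\Gamma)$. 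Such a homotopy exists precisely because $h(a)=h(b)=v$ means the two ends of $h(e)$ sit at the \emph{same} point of $N$, so the crossing of $e$ with $\TT$ near $a$ can be cancelled against the crossing near $b$ by an isotopy of $N$ supported near $v$ (a ``finger move'' retracting a neighborhood of $v$ inside $\TT$), pulled back through $h$.

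The main obstacle I anticipate is making the cancellation \emph{local and well-defined}: one must check that pushing $h(e)$ off the vertex near $a$ does not create new intersections of $h$ with $\TT$ elsewhere, and that the homotopy can be realized so the new preimage graph genuinely has fewer edges rather than merely rearranging them. This requires choosing the support of the homotopy carefully — contained in a neighborhood of $e$ meeting no other vertices of $\Gamma$ — and handling the bookkeeping of how $e$ glues to adjacent edges at $a$ and $b$ (including the degenerate case where $e$ is a loop, i.e.\ $a=b$, or where the two triangles flanking $e$ coincide). I expect this to reduce, after smoothing, to a standard innermost-arc elimination of the kind used in normal-surface and Dehn--Nielsen arguments; the transversality of $h$ to $\TT$ is exactly what guarantees the local models are as described.
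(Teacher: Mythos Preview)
Your overall strategy --- assume an edge $e$ has both endpoints mapped to the same vertex and then homotope $h$ to lower $E(\Gamma)$ --- is exactly the paper's. But the mechanism you settle on does not work. You propose to cancel the two crossings ``by an isotopy of $N$ supported near $v$ \ldots\ pulled back through $h$''. Post-composing $h$ with an ambient isotopy of $N$ (or any modification supported near the vertex $v\in N$) affects \emph{every} preimage of $v$, not just the endpoints $a,b$ of this particular edge; likewise, modifying $h$ in a disk around $a$ (or $b$) in $M$ moves \emph{all} edges of $\Gamma$ incident to that vertex, not just $e$. So your move is not local to $e$, and you have no control over the rest of $\Gamma$.

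The paper works instead with the edge $e'\subset\TT$ containing $h(e)$ rather than with the vertex $v$. Take a tubular neighbourhood $U\supset e'$ in $N$ and a tubular neighbourhood $V\supset e$ in $M$ with $h(V)\subset U$ and $h(\partial V)\subset\partial U$. The key observation, which your proposal never isolates, is that since both endpoints of $e$ go to the \emph{same} endpoint of $e'$, the image $h(V)$ does not cover all of $e'$; hence one can homotope $h$ on $V$, relative to $\partial V$, so that $h(V)\subset\partial U$. This deletes $e$ from $\Gamma$ while re-routing the adjacent edge-germs at $a$ and $b$ in a controlled way (see the figure in the paper), giving the desired contradiction. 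You actually started down this road (``push $e$ across'' in a collar of $e$) before switching to the vertex picture; that first instinct was the right one.

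Finally, the degenerate case $a=b$ that you flag is dispatched separately and first: transversality of $h$ at vertices of $\TT$ yields a bijection between half-edges of $\Gamma$ at $a$ and half-edges of $\TT$ at $h(a)$; since $\TT$ is the $1$-skeleton of a triangulation it has no loops, so neither does $\Gamma$.
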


So, for every component $A\subset M\setminus\Gamma$
its image $h(A)$ is contained in certain triangle $B\subset N$ and $h(\partial A)\subset\partial B$.
Moreover, by Claim~\ref{c:no-loops}, we may assume that
%every edge of $\Gamma$ maps to an edge of $\TT$ monotonously.
every component $\alpha\subset\partial A$ is either an isolated circle,
or $\alpha$ maps to $\partial B$ monotonously with index $i_\alpha\ne0$.
In the last case we call $\alpha$ {\it essential}.

\begin{claim}\label{c:no-circles}
Either $\Gamma$ has no isolated circles, or $\Gamma$ is a union of such circles and has no vertices.
\end{claim}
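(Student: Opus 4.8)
The plan is to argue by contradiction using the minimality of $E(\Gamma)$, exploiting the fact that an isolated circle of $\Gamma$ carries no combinatorial data other than its isotopy class in $M$. Suppose $\Gamma$ contains an isolated circle $C$ and also has at least one vertex. Since $M\setminus\Gamma$ may have several components, the first step is to locate $C$ relative to the rest of $\Gamma$. Because $h(C)$ lies in the interior of some triangle $B$ (it does not cover any vertex of $\TT$, being isolated), the restriction $h|_C$ is null-homotopic in $N$ --- indeed, a disc in $B$ bounds it. I would first treat the case where $C$ bounds a disc $D\subset M$ with $\int D\cap\Gamma=\es$: then $h|_{\partial D}$ extends over $D$ inside $B$, so we may redefine $h$ on $D$ to send all of $D$ into $B$ missing $\TT$, which deletes the edge $C$ from $\Gamma$ without adding any new edges, contradicting minimality.

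The substantive case is when $C$ is essential in $M$ (does not bound a disc), yet $\Gamma$ still has vertices elsewhere. Here the idea is that one can slide $C$ across $M$ until it meets another part of $\Gamma$, or equivalently, compress along an annulus. More precisely: let $A$ be the component of $M\setminus\Gamma$ having $C$ as a boundary component; if some other boundary component $\alpha$ of $A$ is an essential circle or a vertex-containing edge, then $h(A)\subset B$ for one triangle $B$, and one can use an embedded arc in $A$ from $C$ to $\alpha$ to push $h(C)$ onto (a neighborhood of) $h(\alpha)$, merging $C$ into $\alpha$; a careful bookkeeping shows the new graph has strictly fewer edges (the circle $C$ disappears and $\alpha$ is not subdivided into more edges than before), again contradicting minimality. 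The case to rule out last is $A$ being an annulus with both boundary circles isolated; iterating, one shows that if \emph{every} component of $M\setminus\Gamma$ adjacent to an isolated circle is such an annulus, then $\Gamma$ is a disjoint union of isolated circles and $M$ is tiled by annuli, so $\Gamma$ has no vertices at all --- precisely the second alternative in the claim.

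I expect the main obstacle to be the bookkeeping in the merging step: one has to verify that pushing an isolated circle onto a neighboring edge or essential circle genuinely lowers $E(\Gamma)$ rather than merely rearranging it, and that the resulting $h$ remains transversal to $\TT$ after smoothing. The delicate point is that $h(\alpha)$ wraps around $\partial B$ with some index $i_\alpha\neq 0$, so when $C$ is absorbed into a parallel copy of $\alpha$ the preimage graph near $\alpha$ could a priori gain edges; one must choose the homotopy so that $C$ lands in a thin annular collar of $\alpha$ mapped monotonically into $\partial B$, keeping the combinatorial type of $\alpha$ unchanged. Once this local picture is set up, the count $E(\Gamma_{\mathrm{new}})=E(\Gamma)-1$ is immediate. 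I would organize the write-up by first disposing of the disc case, then handling the general essential case via the annulus/collar construction, and finally observing that the only configuration surviving all these reductions is the one described in the claim.
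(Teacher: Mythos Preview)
Your central maneuver --- taking an embedded arc in a component $A$ from the isolated circle to an essential boundary curve and homotoping $h$ along it to merge the two, thereby dropping $E(\Gamma)$ by one --- is exactly the paper's argument, and your worry about the bookkeeping is unfounded: the merge absorbs the single edge of the isolated circle without creating new vertices, so $E(\Gamma)$ decreases by exactly~$1$.

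The scaffolding around this step, however, has real problems. First, you misread what an isolated circle is: since $C\subset\Gamma=h^{-1}(\TT)$, one has $h(C)\subset\TT$; ``isolated'' means $h(C)$ misses the vertices of $\TT$, so $h(C)$ lies in the interior of an \emph{edge} of $\TT$, not in the interior of a triangle. Your disc case (``redefine $h$ on $D$ to send all of $D$ into $B$ missing $\TT$'') therefore does not work as written, because $h(\partial D)\subset\TT$. Second, the dichotomy ``$C$ bounds an empty disc'' versus ``$C$ is essential in $M$'' is not exhaustive: $C$ may bound a disc whose interior meets $\Gamma$. Third, in the residual case where the adjacent component $A$ has only isolated circles on its boundary, you assume $A$ is an annulus and ``iterate''; but $A$ can be any compact surface with boundary, so this step is not well-posed. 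The paper sidesteps all three issues with a one-line connectivity argument: color each edge of the dual graph of $\Gamma$ blue or red according as the corresponding curve is isolated or essential; since $M$ is connected and both colors occur, some dual vertex --- i.e., some component $A\subset M\setminus\Gamma$ --- is incident to both colors. That hands you the mixed component directly, and then your arc-merging move (the paper's Figure~\ref{fig:deliting-circle}) finishes the proof with no case analysis.
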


In case when $\Gamma$ is a union of isolated circles,
the image of $h$ contains in $N\setminus\sk^0(\TT)$.
Note that $N\setminus\sk^0(\TT)$ can be deformation retracted onto the dual graph of $\TT$.
This proves the theorem in that case.

Further we will assume that $\Gamma$ has no isolated circles.
Take a triangle $B\subset N$ and  a component $A\subset h^{-1}(B)$.
Orient $\partial B$ ant $\partial A$ so that $h$ preserves the orientation.

\begin{claim}\label{c:boundary-orientation}
Either $\partial A$ consists of one component with index $1$,
or $A$ is orientable and
all the components of~$\partial A$ has the same orientation with respect to $A$.
\end{claim}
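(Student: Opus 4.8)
The plan is to analyze the map $h|_A:A\to B$, where $B$ is a triangle (hence a disk) and $h(\partial A)\subset\partial B$. Since $\Gamma$ has no isolated circles, every component $\alpha\subset\partial A$ is essential, mapping to $\partial B\simeq S^1$ monotonously with some nonzero index $i_\alpha$. First I would observe that the total degree of $h|_{\partial A}:\partial A\to\partial B$, computed as $\sum_\alpha i_\alpha$ with signs coming from the chosen orientations on $\partial A$ and $\partial B$, must vanish: indeed $\partial B$ bounds the disk $B$ in $N$, so $[h|_{\partial A}]=0$ in $H_1(\partial B)$ because it factors through $H_1(B)=0$; equivalently, a point in the interior of $B$ is a regular value of a perturbation of $h|_A$ whose signed preimage count on $A$ is zero since $A$ has nonempty boundary and the fundamental class is relative. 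Wait — one must be careful that the signs in $\sum i_\alpha$ are the ones induced by a coherent orientation of $A$, which only makes sense if $A$ is orientable; the cleaner statement is that $h_*[\partial A]=0$ in $H_1(\partial B;\Z_2)$ always, and in $H_1(\partial B;\Z)$ when $A$ is orientable. This already forces the number of boundary components to be handled with the orientability of $A$ as a case split, exactly as in the statement.

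Next I would run the minimality of $E(\Gamma)$ against these constraints. Suppose $\partial A$ has at least two components $\alpha_1,\alpha_2$. If $A$ is non-orientable, I would use that $A$ contains an orientation-reversing loop to find a way to merge or cancel edges of $\Gamma$: concretely, a non-orientable $A$ with boundary has a core one-sided curve, and pushing $h$ across a Möbius band neighborhood lets one reduce to a map on a smaller surface while deleting an edge of $\Gamma$ — contradicting minimality unless $\partial A$ is a single circle. For the single-circle case with non-orientable $A$ one then checks directly that the index is $\pm1$: if $|i_\alpha|\ge 2$, the map $h|_A:A\to B$ would have to be ``wrapped'' and one can again cut along a one-sided curve and reglue to drop below $E(\Gamma)$. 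If $A$ is orientable, choose the coherent orientation; then $\sum_\alpha \pm i_\alpha=0$ (signs from whether $\partial$-orientation agrees with the fixed one), and I claim minimality forces all the $\pm i_\alpha$ to have the same sign, i.e. all boundary components induce the same orientation on $A$ — if two of them induced opposite orientations, there would be an arc in $B$ from one to the other along which one could do a ``finger move'' eliminating a pair of edges of $\Gamma$, contradicting minimality. Finally, when $A$ is orientable with all $\partial$-components coherently oriented, the relation $\sum i_\alpha = 0$ with all $i_\alpha$ of one sign and each $i_\alpha\ne 0$ is impossible unless there is exactly one component, with index necessarily $1$ after the orientation choice; but the orientable case of the claim explicitly allows several components with the same orientation, so here I must instead only assert that the orientations agree and not try to force a single component — rereading the statement, the two alternatives are ``one component, index $1$'' or ``$A$ orientable, all $\partial A$-components coherently oriented'', and the latter is the generic case while the former is what happens when $A$ is non-orientable or has connected boundary; so the logic is: if $A$ is non-orientable then reduce to connected boundary and index $1$; if $A$ is orientable then show coherence of boundary orientations.

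The reduction I would carry out in detail: enumerate the possibilities for $(A, \{i_\alpha\})$ and in each ``bad'' case exhibit an explicit homotopy of $h$, supported near $A\cup(\text{its incident edges})$, that strictly decreases $E(\Gamma)$. The homotopies are of two basic types — a \emph{finger move} that cancels a pair of edges whose images are the same $1$-cell traversed in opposite directions, and a \emph{band slide} across a one-sided curve in a non-orientable $A$ — and in each case one verifies the result is still transversal to $\TT$ and homotopic to $f$.

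\textbf{Main obstacle.} The hard part will be the non-orientable case and, within it, ruling out $|i_\alpha|\ge 2$ for a component $\alpha$ of $\partial A$: one must produce a genuine homotopy decreasing $E(\Gamma)$ rather than merely a homological obstruction, since homology alone permits any index. Making the band-slide move precise — checking that cutting $A$ along a one-sided simple closed curve, modifying $h$ on the resulting orientable piece, and regluing yields a legitimate competitor with fewer edges — is the technical crux, and it is where one genuinely uses that $A$ is a component of $M\setminus\Gamma$ for a \emph{minimal} $\Gamma$ rather than an arbitrary surface.
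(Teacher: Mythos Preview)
Your plan has the right instinct---push an arc from one boundary point of $A$ to another with the same image and modify $h$ near it---but there is a genuine gap in how you expect the contradiction to arise, and your case analysis is more tangled than necessary.

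The key misconception is that the finger move ``eliminates a pair of edges of $\Gamma$''. It does not. If $x_1$ lies on the oriented edge $a_1b_1$ and $x_2$ on $a_2b_2$, then after the surgery along $\gamma$ those two edges are replaced by $a_1a_2$ and $b_1b_2$: the count $E(\Gamma)$ is \emph{unchanged}. What actually breaks minimality is that the new edges have $h(a_1)=h(a_2)$ and $h(b_1)=h(b_2)$, so they violate Claim~\ref{c:no-loops}; it is the subsequent collapse from Claim~\ref{c:no-loops} that decreases $E(\Gamma)$. Without routing the contradiction through Claim~\ref{c:no-loops}, your orientable-case argument does not close. (Your ``arc in $B$'' should also be an arc in $A$.)

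Second, the non-orientable case that you flag as the main obstacle is in fact handled by exactly the same move, not by a separate ``band slide''. If the claim fails, there are points $x_1,x_2\in\partial A$ with $h(x_1)=h(x_2)$: either $\partial A$ has several components, or the single component has index $\ge2$ and hence hits each point of $\partial B$ at least twice. The only requirement on the arc $\gamma$ from $x_1$ to $x_2$ is that it admit a coorientation matching the given orientation of $\partial A$ at both ends. When $A$ is orientable you choose $x_1,x_2$ on components with opposite induced orientations and take any $\gamma$; when $A$ is non-orientable you take any $x_1,x_2$ and, if the coorientations disagree, reroute $\gamma$ once around an orientation-reversing loop in $A$. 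So there is no need for a special mechanism or for ruling out $|i_\alpha|\ge2$ by hand---that case falls to the same arc surgery. Finally, the homological computation of $\sum_\alpha i_\alpha$ at the start is a red herring: nothing in the argument uses it, and (as you noticed) it does not even make sense without orientability.
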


If $A$ is nonorientable, then $h|_A$ is homotopic to a pinch map %or to a diffeomorphism
so that the homotopy is stationary in a neighborhood of $\partial A$.

Otherwise, orient $A$.
We can present $A$ as a connected sum of certain number of disks $D_1,\ldots,D_k$
and a closed surface $S$ (which is possibly a sphere).
We may assume that they are joined by cylinders $C_1,\ldots,C_k$ in that order.

Homotope $h$ so that each $C_j$ maps to a single point as well as $S$.
Then on every $D_j$ we can homotope $h$ to a branched covering with one critical point of index $i_{\partial D_j}$
or to a diffeomorphism if $i_{\partial D_j}=1$.
The homotopy assumed to be stationary in a neighborhood of $\partial A$ and the resulting $h$ preserves the orientation.
Finally, homotope $h$ near %each $C_j,\ j=1,\ldots,k-1,$ with 
$C_1,\ldots,C_{k-1}$
to a branched covering with two branched point of index $2$.
The subsurface $C_k\cup S\subset A$ remains pinched.

Repeat this for all components $A\subset M\setminus\Gamma$,
and the proof of Theorem~\ref{th:factirisation} is complete.
\end{proof}

\subsection{Properties of the minimal graph}\label{s:properties}

\begin{proof}[Proof of Claim~\ref{c:no-loops}]
Recall that $h$ maps vertices of $\Gamma$ to vertices of $\TT$
and interiors of edges of~$\Gamma$ to interiors of edges of $\TT$.
By transversality of $h$ to the vertices of $\TT$,
the half-edges of any vertex $v\in\Gamma$
are in bijective correspondence with the half-edges of $h(v)\in\TT$.
Therefore, since $\TT$ has no loops, then $\Gamma$ has no loops either.

Take an edge $e\subset\Gamma$ with endpoints $v$ and $w$.
Suppose $h(v)=h(w)$.
Let $e'\subset\TT$ be a (closed) edge which contains $h(e)$.
Take a small tubular neighborhood $U\supset e'$
and a tubular neighborhood $V\supset e$ such that $h(V)\subset U$
and $h(\partial V)\subset\partial U$.

\begin{figure}[h]
\center{\includegraphics[width=100mm]{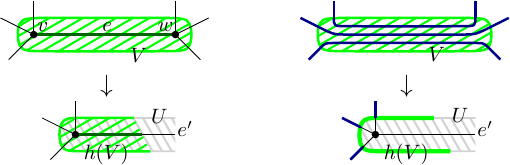}}
\caption{Collapsing of an edge with the same images of the endpoints}\label{fig:collapsing-edge}
\end{figure}

Since $h(v)=h(w)$, %and because of transversality,
the image $h(V)$ do not cover the whole of~$e'$.
Then we can homotope $h$ on $V$ and squeeze $h(V)$ outside $U$, so that $h(V)\subset\partial U$.
We will see that the edge $e$ disappeared and other edges incident on $v$ and $w$ will modify as in Figure~\ref{fig:collapsing-edge}.

Thus, $E(\Gamma)$ decreased at least by $1$,
that contradicts the minimality.
\end{proof}

\begin{proof}[Proof of Claim~\ref{c:no-circles}]
Suppose $\Gamma$ has isolated circles and has vertices as well.
Then find a component $A\subset M\setminus\Gamma$
whose boundary includes at least one isolated circle $\alpha_0$ and at least one essential curve $\alpha_1$.
To do this, one can consider the dual graph of $\Gamma$,
mark blue the edges dual to isolated circles,
mark red the edges dual to essential curves,
and then find a vertex incident on edges with different colors.

Take a triangle $B\subset N$ such that $h(A)\subset B$.
Take points $x_0\in\alpha_0$ and $x_1\in\alpha_1$ such that $h(x_0)=h(x_1)=y\in\partial B$.
Take a non self-intersecting path $\gamma$ in $A$ from $x_0$ to $x_1$.
Then $h\circ\gamma$ is a loop inside the triangle $B$ with basepoint $y\in\partial B$.

\begin{figure}[h]
\center{\includegraphics[width=130mm]{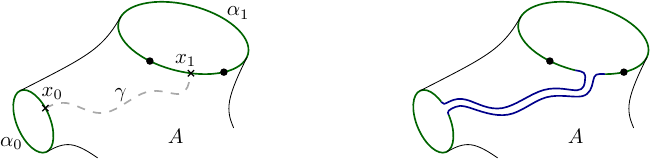}}
\caption{Join of an isoleted circle with an essential curve}\label{fig:deliting-circle}
\end{figure}

Similarly to the proof of Claim~\ref{c:no-loops},
we can homotope $h$ to compress $h(\gamma)$ to $y$ and then to squeeze $h(\gamma)$ outside $B$,
so that the homotopy is stationary outside a small neighborhood of~$\gamma$.
As a result, $\alpha_0$ and $\alpha_1$ will join together in one curve, see Figure~\ref{fig:deliting-circle}.
Thus $E(\Gamma)$ decreased by $1$,
that contradicts the minimality.
\end{proof}

\begin{proof}[Proof of Claim~\ref{c:boundary-orientation}]
Suppose the hypothesis of the claim is violated.
Then one can find two points $x_1,x_2\in\partial A$ such that $h(x_1)=h(x_2)=y\in N$
and a non self-intersecting curve $\gamma$ in $A$ from $x_1$ to $x_2$
%with the following property.
which admits a coorientation agreed with the orientation of $\partial A$ at $x_1$ and $x_2$.
(Indeed, if $A$ is nonorientable, we can take any $x_1,x_2$ with the same image and then choose $\gamma$ properly.
And if $A$ is orientable, we take $x_1$ and $x_2$ on the components of $\partial A$ with different orientation with respect to $A$ and take any~$\gamma$.)

Then we can homotope $h$ in a small neighborhood of $\gamma$ similarly to the proof of Claim~\ref{c:no-circles}.
Suppose $x_1$ belongs to the oriented edge $a_1b_1$ of $\Gamma$, and $x_2$ belongs to the oriented edge $a_2b_2$.
Then after the homotopy $\Gamma$ modifies as in Figure~\ref{fig:deliting-circle}
so that these edges will be replaced by the edges $a_1a_2$ and $b_1b_2$.

Note that the homotopy will not change $E(\Gamma)$.
But also note that $h(a_1)=h(a_2)$ and $h(b_1)=h(b_2)$,
that contradicts Claim~\ref{c:no-loops}.
\end{proof}

\begin{remark}
Clearly, if $f$ is the composition of a pinch map with a branched covering, then $\Deg f\ne0$.
In case $\Deg f=0$ one can strengthen Theorem~\ref{th:factirisation} as
{\it $f$ is homotopic to the composition of a retraction of $M$ to a graph $\Gamma'\subset M$
with a projection $\Gamma'\to N$}.
Note that as $\Gamma'$ we can take the dual graph of $\Gamma$ above,
but it may be not isomorphic to the dual graph of $\TT$.
\end{remark}

\section{Estimation of the degree}\label{s:degree-th-proof}

Note that in Theorem~\ref{th:factirisation}
%, if $M$ is connected, then
the pinched subsurface of $M$ may be assumed to be connected (or empty).
Also, in order to prove Theorem~\ref{th:degree}, we note the following remark.

\begin{proposition}\label{pr:nonorientable-branch}
The resulting map in Theorem~\ref{th:factirisation}
cannot both pinch nonorientable subsurface of $M$ and have a branch point.
\end{proposition}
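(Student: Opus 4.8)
The plan is to trace carefully through the construction of the resulting map in the proof of Theorem~\ref{th:factirisation} and show that the two features in question arise from disjoint parts of the surface, after which one uses the fact (noted just before the statement) that the pinched subsurface may be taken connected. Recall that for each component $A\subset M\setminus\Gamma$ we either (i) found $A$ nonorientable and homotoped $h|_A$ to a pinch map, rel a neighborhood of $\partial A$, or (ii) found $A$ orientable, wrote $A$ as a connected sum of disks $D_1,\dots,D_k$ and a closed surface $S$, pinched the cylinders and $S$, and turned each $D_j$ into a branched covering (or a diffeomorphism when $i_{\partial D_j}=1$). Thus branch points are created only inside orientable components of type (ii), while the nonorientable pinching comes only from components of type (i).

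The key step is to see that the pinched subsurface can be assembled from the type-(i) components \emph{and} the cylinders/closed-surface parts of type-(ii) components into a single connected subsurface $P\subset M$, disjoint from all the disks $D_j$, in such a way that the complement (where $h$ is a local diffeomorphism or branched covering) still maps by a branched covering to $N$. Concretely, a type-(i) component $A$ maps to a pinch map onto a disk $B$, so $h|_A$ factors through collapsing $A$ to an arc or point in $B$; a type-(ii) component has its $C_k\cup S$ part collapsed. One connects these pieces along $\Gamma$: since $\Gamma$ is a graph, a regular neighborhood of $\Gamma$ together with all the pinched pieces forms a connected subsurface, and collapsing it is again a pinch map by the definition (each component having connected boundary can be arranged, or one argues as in the remark that the connected version suffices). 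Then if $P$ is nonorientable — which is forced as soon as some type-(i) component occurs — the branched covering $q:Q\to N$ has domain $Q=M/P$, and the branch points of $q$ sit over the type-(ii) disks; but $q$ being a branched covering is compatible with branch points regardless. The real content is therefore an incompatibility \emph{internal to each component}: a component $A$ is treated as type-(i) \emph{exactly when} it is nonorientable, and in that case $h|_A$ is made a pure pinch with no branch point; conversely branch points live only in type-(ii) orientable components, whose contribution to the pinched locus is orientable (cylinders and, at worst, a sphere $S$). So the nonorientability of the pinched subsurface $P$ can only come from a type-(i) component, which contributes no branch point, and any branch point comes from a type-(ii) component, which contributes only orientable pinched pieces; hence the total nonorientable pinching and the branch points cannot coexist once $P$ is taken connected, because a connected nonorientable $P$ would have to contain a type-(i) component, forcing \emph{all} type-(ii} contributions into that same component, but then those disks $D_j$ (carrying the branch points) are not pinched.

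The main obstacle I anticipate is bookkeeping at the interface: making precise that one can simultaneously (a) keep the pinched subsurface connected, (b) keep it containing all nonorientable handles, and (c) keep the branched-covering structure on the complement, since the homotopies over different triangles $B$ were only required to be stationary near $\partial A$ and the pieces must be glued consistently along the graph $\Gamma$. A secondary subtlety is the degenerate case where there are \emph{no} nonorientable components at all, in which case the statement is vacuous, and the case where the image of $h$ is a graph, where there is no branch point by construction; these should be dispatched in a sentence. I expect the clean way to phrase the argument is: \emph{if the resulting map has a branch point, that branch point lies in a disk $D_j$ inside an orientable component $A$ of type (ii), and $D_j$ is not pinched; the pinched subsurface, being disjoint from $D_j$ and assembled from pinch pieces of the remaining components, is then built only from orientable pieces (cylinders, spheres) together possibly with other components — but if some other component is nonorientable, its pinched image is a disk disjoint from $D_j$, contradicting connectedness of the pinched subsurface.} Filling in the last implication — that connectedness of the pinched locus forbids a nonorientable piece living in a different component from $D_j$ — is the heart of the matter and follows from the structure of $M\setminus\Gamma$ and the fact that distinct components of $M\setminus\Gamma$ meet only along $\Gamma$, whose regular neighborhood is a union of annuli and disks and hence orientable.
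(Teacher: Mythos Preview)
Your argument has a genuine gap at precisely the step you yourself flag as ``the heart of the matter.'' You correctly observe that in the construction branch points arise only inside orientable type-(ii) components, while nonorientable pinching arises only inside nonorientable type-(i) components. But from this you never derive a contradiction: the coexistence of a type-(i) component and a separate type-(ii) component carrying a branch point is a description, not an obstruction. Your proposed mechanism is the connectedness of the pinched subsurface $P$ --- you want a connected nonorientable $P$ to be trapped inside a single type-(i) component, so that any pinched piece coming from a type-(ii) component would force $P$ to be disconnected. This conflates two incompatible readings. In the construction as written, the pinched pieces lie in the interiors of distinct components of $M\setminus\Gamma$, and the total pinched locus is therefore typically \emph{disconnected}; nothing in the proof forces it to be connected. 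The remark that $P$ ``may be assumed connected'' refers to a further modification (tubing the pieces together), after which $P$ is no longer confined to $M\setminus\Gamma$ and your localization argument collapses. Either way, nothing you have written excludes having both a nonorientable type-(i) component and an orientable type-(ii) component with a branch point in the same minimal decomposition.

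The paper's proof uses an entirely different idea: it returns to the \emph{minimality of $E(\Gamma)$}, which your argument never invokes. If the pinched subsurface contains a M\"{o}bius band $L$ and there exists a component $A\subset M\setminus\Gamma$ with disconnected boundary or boundary of index ${>}1$ (the source of a branch point), one ``slides'' the crosscap $L$ across $M$ into $A$. This move does not change $\Gamma$, but the resulting $A$ is now nonorientable while its boundary still violates the conclusion of Claim~3; since Claim~3 was deduced from the minimality of $E(\Gamma)$, this is the desired contradiction. The obstruction, in other words, is a further application of the extremal hypothesis on $\Gamma$, not combinatorial bookkeeping of where the pinched pieces sit.
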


Note that this assertion refers to the decomposition obtained
just in our proof of Theorem~\ref{th:factirisation}.
Of course one can compose a pinch of a crosscap with a branched covering
(e.\,g.\ as in Remark~\ref{r:pinch-branchedcovering}),
but this is not our case.

\begin{proof}
Suppose that the obtained pinch map $p:M\to Q$
collapses a M\"{o}bius band $L\subset M$
(and, possibly, some other subsurface).
Note that we can ``move'' $L$ across $M$.
Namely, we can replace $L$ by a point
and for any component $A\subset M\setminus\Gamma$ glue a M\"{o}bius band instead of any point inside $A$
defining $p$ on it %the new M\"{o}bius band
as a collapse to a single point.

As such $A$ we take a component of $M\setminus\Gamma$ with a disconnected boundary
or a component whose boundary has index $>1$.
Otherwise, if there is no such a component,
the resulting map will have no branched points,
as we can see from the final part of the proof of Theorem~\ref{th:factirisation},
and our proposition holds.

Then, after the ``moving'' $L$ into the $A$,
the statement of Claim~\ref{c:boundary-orientation} does not hold the obtained map.
But the moving of $L$ does not change $\Gamma$,
that contradicts the minimality of $E(\Gamma)$.
\end{proof}

\begin{comment}
\begin{lemma}
Let $f:M\to N$ be a map of surfaces.
Suppose $\Deg f>0$, suppose $M$ is orientable and $N$ is nonorientable.
Then $f$ lifts to the orientation double cover $\widetilde N\to N$
\end{lemma}

\begin{proof}
By Theorem~\ref{th:factirisation},
the map $f$ is homotopic to the composition of a pinch and a branched covering.
Therefore $f^*(w_1N)=0$.
In other words, for any loop $\gamma:S^1\to M$
its image $f\circ\gamma$ does not change local orientation in $N$.
This allows us to lift $f|_{\sk^1(M)}$ to the cover $\widetilde N\to N$,
and the lifting on $2$-cells of $M$ exists and unique by the homotopy lifting property.
\end{proof}
\end{comment}

\begin{proposition}\label{pr:deg-q-deg-f}
For the factorization $M\xrightarrow{p}Q\xrightarrow{q}N$ from our proof of Theorem~\ref{th:factirisation},
if $q$ is a $d$-fold branched covering, then $\Deg f=d$.
\end{proposition}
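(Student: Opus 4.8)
The plan is to show $\Deg f = \Deg q$ and then invoke Corollary~\ref{col:covering-degree} to identify $\Deg q$ with $d$. Since $f$ is homotopic to $q\circ p$, we have $\Deg f = \Deg(q\circ p)$, so it suffices to prove $\Deg(q\circ p)=\Deg q$. One inequality is the general fact recorded after Proposition~\ref{pr:composition-covering}: $\Deg(q\circ p)\le\Deg p\cdot\Deg q$, and a pinch map $p$ has $\Deg p=1$ (a generic point of $Q$ lies in the non-collapsed part and has exactly one preimage), so $\Deg(q\circ p)\le\Deg q=d$. The content is the reverse inequality $\Deg f\ge d$.

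For the reverse inequality, the cleanest route is to produce directly a regular value $y\in N$ of a map homotopic to $f$ with exactly $d$ preimages, using the explicit structure of the factorization built in the proof of Theorem~\ref{th:factirisation}. Recall that there $Q$ is obtained from $M$ by collapsing a (connected, by the remark opening \S\ref{s:degree-th-proof}) subsurface, and that over each triangle $B$ of the triangulation $\TT$ the map $q$ restricted to the corresponding piece of $Q$ is either a homeomorphism onto $B$ or a branched covering with a single branch point over an interior point of $B$. Choose $y$ in the interior of one triangle $B$, away from all the (finitely many) branch values of $q$; then $y$ is a regular value of $q$ with exactly $d$ preimages $z_1,\dots,z_d\in Q$, and each $z_j$ lies in the open part of $Q$ that is not collapsed by $p$, hence has a unique preimage under $p$. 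Therefore $(q\circ p)^{-1}(y)$ consists of exactly $d$ points, and by transversality of the local model it is a regular value of $q\circ p$. This gives $\Deg f\le d$ again and, more to the point, exhibits $d$ as the number of preimages of a regular value of the specific map $q\circ p$; combined with the first inequality, $\Deg f = d$.

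The step I expect to be the main obstacle is justifying that one can pick $y$ so that \emph{all} its $q$-preimages avoid the collapsed locus and so that the count is exactly $d$ with the right signs — i.e. that no cancellation is hiding. This is handled by Proposition~\ref{pr:orientation-true}: the map $q\circ p$ constructed in the proof of Theorem~\ref{th:factirisation} is orientation-true (over each triangle it is a pinch composed with an orientation-preserving branched covering, by Proposition~\ref{pr:nonorientable-branch} the pinched part and the branching never interfere), so $\Deg f = \deg f$, a \emph{homological} quantity computed by the signed count of preimages; since every local sheet of the branched covering is orientation-preserving the $d$ preimages all count with sign $+1$, giving $\deg f = d$ with no cancellation possible. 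Thus $\Deg f = d$.
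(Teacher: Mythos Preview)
Your argument has a genuine gap in the nonorientable case. The second paragraph only re-establishes $\Deg f\le d$ (exhibiting a regular value with $d$ preimages bounds the minimum from above, not below), so everything rests on the third paragraph, where you assert that $q\circ p$ is orientation-true. This is false in general. If the pinched subsurface $K\subset M$ contains a M\"obius band, the core circle $\ell\subset K$ is orientation-reversing in $M$ but $p(\ell)$ is a point, hence $(q\circ p)(\ell)$ is a constant (orientation-preserving) loop. Thus $p$, and therefore $q\circ p$, is not orientation-true, and Proposition~\ref{pr:orientation-true} does not apply; there is no integer-valued $\deg f$ to compute, and the ``all signs are $+1$'' count is meaningless without a global orientation. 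Your appeal to Proposition~\ref{pr:nonorientable-branch} does not repair this: that proposition only says that when $K$ is nonorientable the map $q$ has no branch points; it does not make $K$ orientable or $p$ orientation-true. This is exactly the pathology illustrated in Remark~\ref{r:pinch-branchedcovering}.

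The paper's proof splits into cases precisely to handle this. When $p$ is orientation-true (in particular whenever $M$ is orientable), it argues essentially as you do via Proposition~\ref{pr:orientation-true} and multiplicativity of the cohomological degree. But when $M$ is nonorientable and $K$ is nonorientable, it uses Proposition~\ref{pr:nonorientable-branch} to conclude that $q$ is an honest covering, and then invokes Proposition~\ref{pr:composition-covering} (which needs $q$ to be a covering, not merely a branched covering) together with $\Deg p=1$ to get $\Deg f=d\cdot\Deg p=d$. Your outline is correct in the orientation-true cases, but you must treat the remaining case separately by this route.
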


Since $p$ may be not orientation-true,
this assertion is nontrivial in view of Remark~\ref{r:pinch-branchedcovering}.

\begin{proof}
Consider three cases.
{\it If both $M$ and $N$ are orientable,}
then $p$ is always orientation-true.
It remains to apply Proposition~\ref{pr:orientation-true}
and the functoriality of cohomology.

{\it Suppose $M$ is orientable and $N$ is nonorientable.}
Then $f$ lifts to the orientation double cover $\widetilde N\to N$.
Indeed, the map $q$ is orientation-true, 
%therefore $f^*(w_1N)=0$.
%In other words, for any loop $\gamma:S^1\to M$
%its image $f\circ\gamma$ does not change local orientation in $N$.
so is the composition $q\circ p\sim f$,
this allows us to lift $f|_{\sk^1(M)}$ to the cover $\widetilde N\to N$,
and the lifting on $2$-cells of $M$ exists and unique by the homotopy lifting property.
So we can apply Proposition~\ref{pr:composition-covering}, Corollary~\ref{col:covering-degree}
and the proved case of orientable $M,N$.

{\it Suppose $M$ is nonorientable.}
By Proposition~\ref{pr:nonorientable-branch} 
either $p$ is orientation-true, or $q$ is a covering.
In the first case the composition $q\circ p\sim f$ is orientation-true as well, 
and it remains to apply Proposition~\ref{pr:orientation-true}.
In the second case we just apply Propositions~\ref{pr:composition-covering}, 
Corollary~\ref{col:covering-degree}
and the fact $\Deg p=1$, which holds since $p$ induces a nonzero homomorphism in $H^2(-;\Z_2)$.
\end{proof}

\begin{proof}[Proof of Theorem~\ref{th:degree}]
Apply Theorem~\ref{th:factirisation}.
Denote the set (possibly empty) of critical values of $q$ by $B\subset N$.
By Proposition~\ref{pr:deg-q-deg-f} the number of sheets of $q$ equals $d$.
Then 
$$\chi(Q\setminus q^{-1}(B))=d\cdot\chi(N\setminus B),$$
therefore $\chi(Q)\le d\cdot\chi(N)$
(this is similar to the reasoning in the Riemann-Hurwitz formula).
To complete our proof, note that $\chi(M)\le\chi(Q)$.
\end{proof}

\section{Appendix: stable maps and apparent contours}

Now let us show an unexpected application
of the factorization theorem.

A map of surfaces $f:M\to N$ is called {\it stable},
if its critical points $\Sigma(f)$ are folds and cusps
and the set of critical values $f(\Sigma(f))$ has only transversal self-crossings, called {\it nodes}
(for more details see e.\,g.\ \cite{yamamoto}, see also \cite{ryabichev-surfaces}).
Denote the number of nodes by $n(f)$.

Note that stable maps forms dense open subset in $C^\infty(M,N)$.
Pinches and branched coverings, which we considered above, are not stable,
but one can deform them to stable maps by an arbitrary small homotopy.

\begin{theorem}
Every map of closed surfaces $f:M\to N$ is homotopic to $f'$ such that $n(f')=0$.
\end{theorem}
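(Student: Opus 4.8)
The plan is to start from the factorization provided by Theorem~\ref{th:factirisation} and then perturb it to a stable map with no nodes. By Theorem~\ref{th:factirisation}, $f$ is homotopic either to a map whose image is a graph $\Gamma'\subset N$, or to a composition $M\xrightarrow{p}Q\xrightarrow{q}N$ of a pinch map with a branched covering. I would handle these two cases separately, the first being essentially trivial: if the image of $f$ lies in a graph, then a small generic perturbation pushes $f$ into a tubular neighborhood of that graph, and since a neighborhood of a graph is an open subsurface that is not the whole of $N$, it deformation retracts onto a graph and in particular admits no stable map with a node supported there; more concretely, one can make $f$ factor through an immersed arc and then through a fold map onto an annulus with straight fold lines, giving $n(f')=0$.

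**Key steps.**
For the main case I would first deal with the branched covering $q:Q\to N$. A branched covering can be perturbed to a stable map by a small homotopy, and the critical value set of such a perturbation is a union of small arcs near the branch points of $q$ (each branch point of local degree $m$ splits into $m-1$ fold arcs and cusps, by the standard local model $z\mapsto z^m$ and its versal deformation). These arcs are short and can be placed in disjoint disks around the branch values, so they have no mutual crossings; hence $q$ is homotopic to a stable $q'$ with $n(q')=0$. Second, I would deal with the pinch map $p:M\to Q$: the collapsed subsurface $K\subset M$ can be taken connected, and $p$ is homotopic to a stable map whose critical values again form a small, node-free configuration — a pinch collapsing a handle or crosscap deforms to a stable map whose apparent contour is a single embedded circle or figure-eight-free curve in $Q$, which we may shrink into a tiny disk disjoint from the branch values of $q'$. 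Third, composing $q'\circ p'$ and controlling how the small apparent contour of $p'$ sits relative to the fold arcs of $q'$: by general position I can arrange these two families of critical-value curves to be disjoint and in a position where no transverse double point of the composite critical value set arises.

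**Main obstacle.**
The hard part will be the third step: even if $p'$ and $q'$ are individually node-free, the composition $q'\circ p'$ may create nodes, both from crossings between (the image under $q'$ of) the apparent contour of $p'$ and the fold locus of $q'$, and from the fact that folding twice can turn a fold into a cusp or worse. The key idea to get around this is to exploit the freedom that the pinched subsurface $K$ may be chosen connected and its image under the final map may be taken to be an arbitrarily small disk: one localizes the apparent contour of the pinch in a disk $D\subset Q$ over which $q'$ is an honest covering (no branch values inside $D$), so that $q'$ restricted to $q'^{-1}(D)$ is a trivial $d$-sheeted cover and the composite contributes $d$ disjoint small copies of a node-free curve, none interacting with the fold arcs of $q'$. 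After a final general-position adjustment to remove any accidental tangencies, one concludes $n(f')=0$, completing the proof.
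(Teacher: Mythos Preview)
Your main-case argument is essentially the paper's: both localize the non-stable features of $q\circ p$ --- branch points of $q$ and the collapsed subsurface of $p$ --- and replace them by explicit local stable models whose critical-value curves are small and pairwise disjoint in $N$. The paper does this in one stroke (homotope $q$ so each branch point of index $i$ becomes a fold circle with $i+2$ cusps; replace a collapsed handle by a fold curve with $4$ cusps and a collapsed M\"obius band by two fold curves with one cusp each), whereas you phrase it as ``perturb $p$ and $q$ separately to stable $p',q'$ and then compose''. That extra layer costs you some clarity: you write ``$D\subset Q$'' but then refer to ``$q'^{-1}(D)$'', which only makes sense for $D\subset N$; and the claim that the composite contributes ``$d$ disjoint small copies'' of the pinch contour is wrong --- the apparent contour of $p'$ sits in a single sheet of $Q$ over $D$, so $q'$ carries it to a single copy in $N$. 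None of this is fatal, but the paper's formulation (modify $q\circ p$ directly by local homotopies whose images are small and may be assumed disjoint) avoids having to reason about stability of a composition.

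There is, however, a genuine gap in your treatment of the graph case. You assert that one can ``make $f$ factor through an immersed arc and then through a fold map onto an annulus''. Factoring through an arc (or any contractible space) forces $f$ to be nullhomotopic, and factoring through an annulus forces the induced map $\pi_1(M)\to\pi_1(\Gamma)$ to have cyclic image; neither holds in general (take $M=N=T^2$ and $f$ the composition of the collapse $T^2\to S^1\vee S^1$ with an embedding $S^1\vee S^1\hookrightarrow T^2$). The earlier sentence, that a neighborhood of a graph ``admits no stable map with a node supported there'', is also not an argument --- nodes can certainly occur in any open subset of $N$. The paper's proof in fact does not address the graph case at all; it applies Theorem~\ref{th:factirisation} and immediately starts homotoping $q$. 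So this is a gap in the paper as well, but your attempted fix does not close it. A correct argument requires producing, for an arbitrary map $M\to\Gamma\hookrightarrow N$, a stable representative with embedded (not merely immersed) critical-value set; one way is to use a Morse function on $M$ to push the image off $\Gamma$ into a regular neighborhood along a transverse direction, so that the singular locus becomes a finite collection of fold circles lying in disjoint level annuli.
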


This fact connects the study of apparent contours of T.\,Yamamoto
with the theory of branched coverings,
see e.\,g.\ \cite{gabai-kases}.

\begin{proof}
Apply Theorem~\ref{th:factirisation} to the map $f$.
Then one can homotope $q$ so that each branch point of index $i$
will turn to a fold circle with $i+2$ cusps.
A collapsing of an orientable handle can be turned to a fold curve with $4$ cusps,
see Fig.~\ref{fig:collapsing-handle}.

\begin{figure}[h]
\center{\includegraphics[width=60mm]{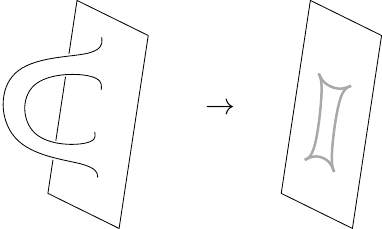}}
\caption{Collapsing of a handle adds a fold curve with $4$ cusps}\label{fig:collapsing-handle}
\end{figure}

Finally, a collapsing of a M\"{o}bius band is homotopic 
to a map with fold along the M\"{o}bius band
and with a curve with one cusp around it,
see Fig.~\ref{fig:collapsing-crosscap}
(on the left, the opposite points on the inner circle should be identified, 
we show them connected by dotted lines).

\begin{figure}[h]
\center{\includegraphics[width=80mm]{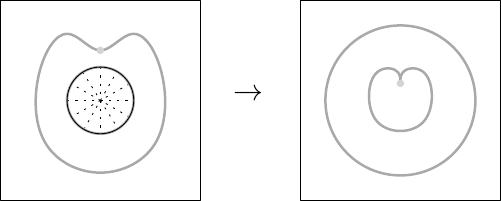}}
\caption{Collapsing of a M\"{o}bius band via adding two fold curves with $1$ cusp}\label{fig:collapsing-crosscap}
\end{figure}

The described homotopies are local,
the images of these curves are small and we may assume that they do not cross each other.
\end{proof}

%\newpage

\end{document}